\documentclass[submission,copyright,creativecommons]{eptcs}

\title{The Cost of Compositionality \\
\large A High-Performance Implementation of String Diagram Composition}
\author{Paul Wilson
\institute{University of Southampton}
\email{paul@statusfailed.com}
\and
Fabio Zanasi
\institute{University College London}
\email{f.zanasi@ucl.ac.uk}
}

\date{\today}

\usepackage{amsmath, amssymb, amsthm}
\usepackage{mathrsfs} 
\usepackage{hyperref}
\usepackage{float}
\usepackage{xcolor}
\usepackage{quiver}
\usepackage{booktabs} 
\usepackage{enumitem}
\usepackage{tikzit}

\tikzstyle{edge}=[fill=white, draw=black, shape=circle]
\tikzstyle{node}=[fill=black, draw=black, shape=circle, inner sep=1.5pt]
\tikzstyle{morphism}=[fill=white, draw=black, shape=rectangle]

\tikzstyle{pointy}=[->]
\tikzstyle{dashy}=[-, dashed]
\tikzstyle{dashpoint}=[dashed, ->]
\tikzstyle{bluefill}=[-, fill={rgb,255: red,190; green,240; blue,255}]
\tikzstyle{greyfill}=[-, fill={rgb,255: red,240; green,240; blue,240}]

\usepackage{stmaryrd} 



\newtheorem{observation}{Remark}[section]

\newtheorem{definition}[observation]{Definition}

\newtheorem{remark}[observation]{Remark}

\newtheorem{proposition}[observation]{Proposition}


\newcommand{\deftext}[1]{\textbf{#1}}
\newcommand{\tr}[1]{\xrightarrow{#1}}
\newcommand{\tl}[1]{\xleftarrow{#1}}

\newcommand{\cp}[0]{\ensuremath{\fatsemi}} 
\newcommand{\id}[0]{\ensuremath{\mathsf{id}}}
\newcommand{\twist}[0]{\ensuremath{\sigma}}


\newcommand{\Cat}[1]{\ensuremath{\mathsf{#1}}}

\newcommand{\Hyp}[0]{\Cat{Hyp}}
\newcommand{\Mat}[0]{\Cat{Mat}}
\newcommand{\Mzero}[0]{\mathbf{0}}
\newcommand{\Har}[0]{\Cat{Har}}

\newcommand{\Csp}[1]{\Cat{Csp}(#1)}

\newcommand{\CspHypI}[0]{\Csp{\Cat{Hyp}_{\scriptscriptstyle \Sigma}}_{\scriptscriptstyle I}}
\newcommand{\Syn}[1]{\Cat{Free}_{\scriptscriptstyle #1}}
\newcommand{\CspHypMI}[0]{\ensuremath{\Csp{\Cat{Hyp}_{\scriptscriptstyle \Sigma}}_{\scriptscriptstyle MI}}}

\newcommand{\Bool}[0]{\ensuremath{\mathbb{B}}}
\newcommand{\Nat}[0]{\ensuremath{\mathbb{N}}}
\newcommand{\List}[0]{\ensuremath{\mathsf{List}}}


\newcommand{\hypernode}[0]{\bullet}
\newcommand{\hyperedge}[0]{\circ}
\newcommand{\inp}[1]{\mathsf{in}(#1)}
\newcommand{\out}[1]{\mathsf{out}(#1)}

\newcommand{\lbo}[1]{\ensuremath{L(#1)}}
\newcommand{\rbo}[1]{\ensuremath{R(#1)}}
\newcommand{\permeq}[1]{\ensuremath{\overset{#1}{\sim}}}
\newcommand{\ToHar}[1]{\ensuremath{\llbracket{}#1\rrbracket{}}}
\newcommand{\FromHar}[1]{\ensuremath{\langle{}#1\rangle{}}}
\newcommand{\Halfperm}[0]{\ensuremath{\mathbf{\mathtt{HALFPERM}}}}
\newcommand{\nnz}[0]{\ensuremath{\mathsf{nnz}}}

\newcommand{\gcopy}[0]{\ensuremath{\mathsf{COPY}}}
\newcommand{\gxor}[0]{\ensuremath{\mathsf{XOR}}}
\newcommand{\gand}[0]{\ensuremath{\mathsf{AND}}}
\newcommand{\gnot}[0]{\ensuremath{\mathsf{NOT}}}

\newcommand{\ObA}[0]{\ensuremath{n}}
\newcommand{\ObB}[0]{\ensuremath{m}}
\newcommand{\ObC}[0]{\ensuremath{l}}

\begin{document}
\maketitle

\begin{abstract}
String diagrams are an increasingly popular algebraic language for the analysis of graphical models of computations across different research fields. Whereas string diagrams have been thoroughly studied as semantic structures, much less attention has been given to their algorithmic properties, and efficient implementations of diagrammatic reasoning are almost an unexplored subject. 

This work intends to be a contribution in such a direction. We introduce a data structure representing string diagrams in terms of adjacency matrices. This encoding has the key advantage of providing simple
and efficient algorithms for composition and tensor product of diagrams. We demonstrate its effectiveness by showing that the complexity of the two operations is linear in the size of string diagrams. Also, as our approach is based on basic linear algebraic operations, we can take advantage of heavily optimised implementations, which we use to measure performances of string diagrammatic operations via several benchmarks.
\end{abstract}

\section{Introduction}
\label{section:introduction}



String diagrams are a ubiquitous graphical notation for depicting morphisms of a
monoidal category, and have been used in a variety of settings--- see e.g.\
\cite{coecke_kissinger_2017, genovese2021categorical, genovese2021nets, spivak2020poly} and \cite{selinger_survey_2010} for an overview.

In order to work with string diagrams on a computer, we require a
representation of them which we can manipulate.
Several such representations have been explored in the literature---see for example the
wiring diagrams of Catlab.jl~\cite{patterson2020}, and the hypergraphs of
\cite{Bonchi_2016} as used in \textsc{Cartographer}~\cite{Cartographer}.

However, to support `industrial scale' uses of string diagrams where modelisations are very large,
there is a pressing need to ensure that operations for combining these structures are \emph{efficient}.
In this work, we define a string diagram representation inspired by the parallel
programming literature (specifically \cite{MESH, practical_parallel_hypergraph}). Our data structure of choice is based on sparse adjacency matrices representing hypergraphs, and thus we call it HAR --- hypergraph adjacency representation. We shall show how to encode string diagrams into HARs, using their characterisation as hypergraphs (from \cite{Bonchi_2016}) as intermediate steps. The following picture summarises the various steps of the encoding:
\[ \scalebox{0.8}{\tikzfig{intro-diagram}} \]

The main point of this implementation is that the encoding allows for simple algorithms for composition
and tensor product.
Composition is especially simplified, being completely expressible in terms of permutation and
tensor product operations on matrices (see Definition~\ref{definition:operations} below).

Furthermore, the algorithms we describe are completely in terms of 
linear-algebraic operations on matrices.
Since highly optimised implementations of such operations are widely available,
this makes our approach straightforward to implement while providing good
performance.
Additionally, since implementations of linear algebra routines are also available for
specialised parallel hardware such as GPUs, our algorithms require
little additional effort to support such settings.

Importantly, we also show that the operations of composition and tensor product
for our representation have linear complexity, a fact which we support with
empirical validation on synthetic benchmarks.

We summarise our main contributions as follows:
\begin{itemize}
\item An isomorphic representation of string diagrams in terms of adjacency matrices of certain graphs
\item Algorithms for tensoring and composition of string diagrams (via their representation)
\item Computational complexity bounds for tensor and composition algorithms
\item An empirical analysis of the performance of our approach
\end{itemize}

The structure of the paper is as follows.
In Section \ref{section:background} we recall the directed hypergraphs of
\cite{Bonchi_2016} and discuss a bipartite encoding of \emph{undirected}
hypergraphs from the parallel processing literature~\cite{MESH}.
In Section \ref{section:representation} we formally describe our proposed
encoding of hypergraphs in terms of adjacency matrices.
We then proceed in Section \ref{section:operations} to describe operations such
as composition and tensor product for our encoding, before showing that these
form a symmetric monoidal category, which is isomorphic to the original category of string diagrams, in Section \ref{section:category}.
Finally, in Section \ref{section:complexity} we discuss the complexity of the
operations described in Section \ref{section:operations},
and show empirical performance results on some synthetic
benchmarks in Section \ref{section:empirical}.

\section{Background}
\label{section:background}


\subsection{Hypergraphs with Interfaces}

Following~\cite{Bonchi_2016}, we will regard string diagrams combinatorially as a certain class of hypergraphs, which we now recall. Throughout this section we fix a \emph{monoidal signature} $\Sigma$, that is, a set of operations $o \colon n \to m$, where $n$ is the \emph{arity} and $m$ the \emph{coarity} of $o$. 

Hypergraphs are a generalisation of directed graphs where edges (ordered pairs of vertices) are replaced by \emph{hyper}edges (ordered lists of vertices). As shown in~\cite{Bonchi_2016}, hypergraphs serve as a characterisation of string diagrams over $\Sigma$ when equipped with the following features: (i) a labeling of hyperedges with $\Sigma$-operations; (ii) the identification of a \emph{left} and a \emph{right} interface of the hypergraph; (iii) the restriction to hypergraphs with interfaces that are \emph{monogamous}. We recall the relevant definitions below.

\begin{definition}
  \label{definition:directed-multi-hypergraph}
  A \deftext{$\Sigma$-labeled (directed) hypergraph} $H$
  is a triple $(V, E, L)$,
  where
  $V$ is a set of \deftext{nodes}, 
  $E\subseteq \List(V) \times \List(V)$ is a set of \deftext{hyperedges}, and
  $L \colon E \to \Sigma$ is a \deftext{labeling} function, where the arity and coarity of $L(e)$ must agree with the length of lists $e \cp \pi_0$ and $e \cp \pi_1$ respectively, for each $e \in E$.
  A node $v$ is a \deftext{source} of $e \in E$ if it appears in the list
  $e \cp \pi_0$, and a \deftext{target} if it appears in $e \cp \pi_1$. $\Sigma$-labeled hypergraphs with the evident structure-preserving morphisms form a category $\Hyp_{\Sigma}$.

A \deftext{$\Sigma$-labeled hypergraph with interfaces} is a cospan $n \tr{f} G
  \tl{g} m$ in $\Hyp_{\Sigma}$, where $n$ and $m$ are the discrete hypergraphs
  constiting of $n$ and $m$ nodes respectively. We call $f[n]$ the \textbf{left
  interface} of $G$ and $g[m]$ the \textbf{right interface} of $G$. We write
  $\CspHypI$ for the PROP\footnote{
    PROPs~\cite{Lack2004} are symmetric monoidal categories with objects the
    natural numbers. They are widely adopted as a way to express
    algebraic theories of string diagrams categorically.
  } whose morphisms $n \to m$ are the
  hypergraph with interfaces $n \tr{f} G \tl{g} m$. Composition is defined by
  pushout.\footnote{In order for composition to be uniquely defined, strictly
  speaking morphisms of $\CspHypI$ should be equivalence classes of hypergraphs
  with interfaces, where $n \tr{} G \tl{} m$ and $n \tr{} G' \tl{} m$ are
  equivalent when there is an isomorphism $G\to G'$ commuting with the cospan
  legs. For the sake of simplicity, we shall use representatives of such
  equivalence classes when working with morphisms of $\CspHypI$. This does not
  have any consequence for the theory developed in the rest of the paper.}
  The notation is due to $\CspHypI$ being a subcategory of the category of
  \emph{cospans} in $\Hyp_{\Sigma}$.
\end{definition}
$\Sigma$-labeled hypergraphs with interfaces serve as a faithful interpretation for the PROP $\Syn{\Sigma}$ whose morphisms are freely generated by the signature $\Sigma$~\cite{Bonchi_2016}. For example, the string diagram $c \colon 2 \to 2$ in $\Syn{\Sigma}$ as on the left below is interpreted as the hypergraph with interfaces $2 \tr{f} G \tl{g} 2$  on the right.
\begin{equation}
  \label{equation:string-diagram-vs-hypergraph}
  \tikzfig{string-diagram-example}
  \qquad \qquad
  \tikzfig{hypergraph-example}
\end{equation}
Note $\Sigma$-operations $\alpha \colon 1 \to 1$, $\beta \colon 1 \to 2$ and $\gamma \colon 2 \to 1$ appearing in $c$ are mapped to hyperedges with the appropriate number of source and target nodes, and the `dangling wires' of $c$ are expressed by the left and right interfaces $2 \tr{f} G$ and $2 \tr{g} G$, depicted as dashed arrows.
Note also that diagrams consisting of only wires are represented by hypergraphs with no hyperedges. For example, the string diagram and hypergraph representation for $\id_2$ are depicted as follows:
\begin{equation}
  \label{equation:identity-string-diagram-vs-hypergraph}
  \tikzfig{identity-string-diagram}
  \qquad \qquad
  \tikzfig{identity-hypergraph}
\end{equation}

Although this interpretation is faithful, it is not full --- there are hypergraphs not representing any string diagram~\cite{Bonchi_2016}. One may achieve a full interpretation by restricting to \emph{monogamous acyclic} hypergraphs with interfaces.

Before recalling the definition of monogamous, we need to record a few preliminaries. The \emph{in-degree} (respectively, \emph{out-degree}) of a node $v$ in a hypergraph $G$ is the number of hyperedges having $v$ as target (source). Write $\inp{G}$ (\emph{inputs}) for the set of nodes with in-degree $0$ and $\out{G}$ (\emph{outputs}) for the set of nodes with out-degree $0$.
\begin{definition}
	  A hypergraph $G$ is \textbf{monogamous acyclic} (ma-hypergraph) if it contains no cycle (acyclicity) and every node has at most in- and out-degree $1$ (monogamy).

  A hypergraph with interfaces $n \tr{f} G \tl{g} m$ is \textbf{monogamous acyclic}  when $G$ is an ma-hypergraph, $f$ is a monomorphism and its image is $\inp{G}$, $g$ is a monomorphism and its image is $\out{G}$. Ma-hypergraphs with interfaces form a sub-PROP $\CspHypMI$ of $\CspHypI$.
\end{definition}

Ma-hypergraphs with interfaces are in 1-to-1 correspondence with string diagrams over the same signature, yielding the isomorphism $\Syn{\Sigma} \cong \CspHypMI$~\cite{Bonchi_2016}.



\subsection{Parallel Hypergraph Processing}\label{section:parahyprpoc}

Hypergraphs have many choices of implementation as a data structure. For example,
one might choose to model hyperedges directly as pairs of lists.
However, the code for such representations must be written from scratch, and can
typically be complicated and error-prone.
Instead, we would like to take advantage of existing,
high performance code for representing graphs, and apply it to hypergraphs.  To
this end, we take inspiration from the parallel programming literature.


The authors of \cite{MESH} describe a distributed processing system for
\emph{undirected} hypergraphs.

\begin{definition}
  \label{definition:undirected-hypergraph}
  An \deftext{undirected hypergraph} $U$ is a pair $(V, E)$ where $V$ is a set of \deftext{nodes},
  and $E \subseteq \mathscr{P}(V) \setminus \emptyset$ is a set of \deftext{hyperedges}.
\end{definition}

Analogously, this definition is a generalisation of the notion of
\emph{undirected graphs}: where an edge is an \emph{unordered} pair of vertices,
a \emph{hyper}edge is a \emph{set} of vertices.

In order to achieve high performance, the authors define an encoding of
their undirected hypergraphs as labeled bipartite graphs.
Concretely, vertices are labeled either $\hypernode$ or $\hyperedge$,
with $\hypernode$-vertices playing the role of hypernodes,
and $\hyperedge$-vertices playing the role of hyperedges.
For example, the bipartite graph below depicts such an encoding, where
an edge $\hypernode \to \hyperedge$ indicates that the source hypernode
appears in the hyperedge set.

\begin{equation}
  \label{equation:bipartite-encoding-undirected-hypergraph}
  \tikzfig{figures/undirected-hypergraph-v2}
\end{equation}

However, since this encoding is specific to the undirected hypergraphs of
\cite{MESH}, we must adapt it to suit our purposes.

\subsection{PROPs of Matrices}
\label{section:props-of-matrices}

We denote the PROP of matrices over a semiring $S$ by $\Mat_S$,
where the tensor product $f \otimes g$ is the \textbf{direct sum}, i.e.:
$ {\small \left|\begin{matrix}
    f & \Mzero \\
    \Mzero & g
  \end{matrix}\right|}
$.
In this paper, we will only consider matrices over the semirings of booleans
$\Bool$ and natural numbers $\Nat$.
We denote the $\ObB \times \ObA$ zero matrix as $\Mzero_{\ObA, \ObB}$, dropping the subscripts
where unambiguous.
We refer to the set of $\ObB \times \ObA$ matrices as $\Mat_S(\ObA, \ObB)$,
and note that we always write composition in diagrammatic order:
that is, for composition $\ObA \overset{f}{\to} \ObB \overset{g}{\to} \ObC$ we always write $f \cp g$.

\subsection{Adjacency Matrices}\label{section:adjacencymatrices}

The adjacency matrix representation of a graph is central to our representation
of hypergraphs, and so we recall it now.
The adjacency matrix of a $K$-node directed graph is a matrix $\Mat_\Bool(K, K)$ where
the $i^{\mathrm{th}}$ column denotes the \emph{outgoing} edges of the
$i^{\mathrm{th}}$ node.
For example, consider the graph and its adjacency matrix below:

\begin{equation}
	\label{example:adjacency-matrix}
	\begin{tikzcd}[sep=small]
		&& {\bullet} && {\bullet} \\
		{\bullet} \\
		&& {\bullet}
		\arrow[from=2-1, to=1-3]
		\arrow[from=2-1, to=3-3]
		\arrow[from=1-3, to=1-5]
	\end{tikzcd}
	\qquad
	{
		\left|\begin{matrix}
			0 & 0 & 0 & 0 \\
			1 & 0 & 0 & 0 \\
			1 & 0 & 0 & 0 \\
			0 & 1 & 0 & 0 \\
		\end{matrix}\right|
	}
\end{equation}

Note that in this particular representation, there can be exactly one edge
between two nodes of the graph.
We can also introduce labeled edges by varying the semiring of $\Mat$: for
example, by considering matrices $\Mat_\Nat(K, K)$ we can consider edges to have
labels in the set $\{ 1, 2, \ldots \}$, with $0$ denoting no edge.
Consider for example the same graph as \eqref{example:adjacency-matrix} but with labeled edges:

\begin{equation}
	\begin{tikzcd}[sep=small]
		&& {\bullet} && {\bullet} \\
		{\bullet} \\
		&& {\bullet}
		\arrow["8", from=2-1, to=1-3]
		\arrow["2"', from=2-1, to=3-3]
		\arrow["4", from=1-3, to=1-5]
	\end{tikzcd}
	\qquad
	{
		\left|\begin{matrix}
			0 & 0 & 0 & 0 \\
			8 & 0 & 0 & 0 \\
			2 & 0 & 0 & 0 \\
			0 & 4 & 0 & 0 \\
		\end{matrix}\right|
	}
\end{equation}

\section{Hypergraph Adjacency Representation}
\label{section:representation}
In this section we provide the main technical definition of the paper: the notion of Hypergraph Adjacency Representation (HAR). We begin by providing a roadmap to the formal definition. 
In a nutshell, the main hurdle is to adapt the approach to undirected hypergraphs in~\cite{MESH} (reported in Section~\ref{section:parahyprpoc}) to (directed) hypergraphs with interfaces. This will provide us a means of representing hypergraphs with interfaces as bipartite graphs, and thus as the corresponding adjacency matrices. As string diagrams can be identified as a certain class of hypergraphs with interfaces, this methodology will yield an implementation of string diagrams as adjacency matrices.
  
Before delving in the formal definition, the approach is best illustrated via an
example.
Recall the string diagram in~\eqref{equation:string-diagram-vs-hypergraph} (below left) with
its interpretation as a hypergraph with interfaces (below center).
Its bipartite graph encoding is displayed below right.
\begin{equation}
  \label{equation:example-encoded-hypergraph}
  \scalebox{0.7}{
    \tikzfig{string-diagram-example}
    \qquad
    \tikzfig{hypergraph-example}
    \qquad
    \tikzfig{adjacency-example}
  }
\end{equation}
Note this is similar to the bipartite graph encoding shown
in (\ref{equation:bipartite-encoding-undirected-hypergraph}), as made evident when we rearrange the bipartite graph of~\eqref{equation:example-encoded-hypergraph} as follows:
\begin{equation}\label{eq:bipartite-stringdiagram}
  \tikzfig{directed-hypergraph-v2}
\end{equation}
The differences are (i) the presence of interfaces (needed because we are interested in \emph{composing} these structures), (ii) the labeling of $\hyperedge$-vertices with $\Sigma$-operations, and (iii) the labeling of edges with natural numbers. The latter information indicates the position, in the original hypergraph with interfaces, of a node in the source/target lists of a hyperedge. For instance, the edge labeled with $2$ indicates that, in the original hypergraph, the target node of hyperedge $\alpha$ is in the second position in the source list of hyperedge~$\gamma$.

The next step is translating this bipartite graph into an adjacency matrix (along the lines of Section~\ref{section:adjacencymatrices}), together with information on what are the interfaces of the graph. This leads us to the data structure called HAR: a 4-tuple $(M, L, R, N)$,
with $M$ serving double-duty as the adjacency matrix and edge-label data, $N$ a
vector of node labels, and $L$ and $R$ permutation matrices reordering $M$ so
that left boundary nodes are the first and right boundary nodes the last, respectively.
Returning to our example hypergraph in \eqref{equation:example-encoded-hypergraph},
we represent it with the following data (in which we write $N$ twice for clarity):

\begin{align*}
  M & = {
    \left|\begin{matrix}
      0 & 0 & 0 & 0 & 0 & 0 & 0 & 0 & 0 \\
      0 & 0 & 0 & 0 & 0 & 0 & 0 & 0 & 0 \\
      1 & 0 & 0 & 0 & 0 & 0 & 0 & 0 & 0 \\
      0 & 1 & 0 & 0 & 0 & 0 & 0 & 0 & 0 \\
      0 & 0 & 1 & 0 & 0 & 0 & 0 & 0 & 0 \\
      0 & 0 & 0 & 1 & 0 & 0 & 0 & 0 & 0 \\
      0 & 0 & 0 & 0 & 2 & 1 & 0 & 0 & 0 \\
      0 & 0 & 0 & 0 & 0 & 0 & 1 & 0 & 0 \\
      0 & 0 & 0 & 2 & 0 & 0 & 0 & 0 & 0 \\
    \end{matrix}\right|
    \qquad
    N =
    \left|\begin{matrix}
      \hypernode \\ \hypernode \\ {\alpha} \\
      {\beta} \\ \hypernode \\ \hypernode    \\
      {\gamma} \\ \hypernode \\ \hypernode
    \end{matrix}\right|
    \qquad
    L = \id_9
    \qquad
    R = {
      \left|\begin{matrix}
        \id_7  & \Mzero \\
        \Mzero & \twist_{1,1} \\
      \end{matrix}\right|
    }
  } \\
  N & = {
    \left|\begin{matrix}
      \hypernode  & \hypernode & \alpha &
      \beta     & \hypernode & \hypernode        &
      \gamma    & \hypernode & \hypernode
    \end{matrix}\right|
  }
\end{align*}

We can read the columns of $M$ as the outgoing edges for a particular node.
See for example the column for $\beta$, which has two outgoing edges
labeled $1$ and $2$, both of which connect to nodes labeled $\hypernode$.
Note that $L$ is the identity matrix: this means that the left interface nodes
appear first, and moreover they appear in the same order as in the interface.
Hence, the first $2$ rows contain only zeros because the left interface nodes
have no incoming edges.
On the other hand $R$ is the block matrix
${\scriptsize
\left|\begin{matrix}
  \id    & \Mzero \\
  \Mzero & \twist \\
\end{matrix}\right|
}$
and so while the final two nodes are the right interface nodes, their order in
the interface is swapped.

\subsection{Main Definition}

We can now give our main definition---for background on matrix notation see
Section \ref{section:props-of-matrices}.

\begin{definition}[Hypergraph Adjacency Representation]
  \label{definition:har}
  Fix a monoidal signature $\Sigma$.
  A hypergraph adjacency representation of type $\ObA \to \ObB$ is written $\Har_{\ObA, \ObB}$
  and consists of the following data:

  \begin{itemize}[label=-]
    \item \textbf{Size} $K \in \Nat$
    \item \textbf{Labeled Adjacency Matrix} $M \in \Mat_{\Nat}(K, K)$
    \item \textbf{Left Permutation} $L \in \Mat_{\Bool}(K, K)$
    \item \textbf{Right Permutation} $R \in \Mat_{\Bool}(K, K)$
    \item \textbf{Node Labels} $N \in (\{ \hypernode \} + (\{ \hyperedge \} \times \Sigma))^K$
  \end{itemize}

  \noindent satisfying the following conditions:

  \begin{itemize}[label=-]
    \item The graph represented by $M$ is acyclic.
    \item The matrix $L^T \cp M \cp L$ is ordered such that
          the \textbf{first} $m$ nodes are the \deftext{left interface nodes}
    \item The matrix $R^T \cp M \cp R$ is ordered such that
      the \textbf{last} $n$ nodes are the \deftext{right interface nodes}.
    \item If a node labeled $\hypernode$ is not an interface node, then it has
      exactly one incoming and outgoing edge.
    \item For each vertex $v$ labeled $(\hyperedge, g)$ with $g$ having arity/coarity $m, n$,
      \begin{itemize}[label=-]
        \item $v$ has incoming edges $e_1 \ldots e_m$ with labels $1 \ldots m$ respectively.
        \item $v$ has outgoing edges $e_1 \ldots e_n$ with labels $1 \ldots n$ respectively.
      \end{itemize}
  \end{itemize}

\end{definition}

\subsection{Permutation Equivalence and Boundary Orderings}

In Section \ref{section:operations} we will see that composition of \Har{}s is only
associative up to isomorphism.
Therefore, in order to form a category of \Har{}s, we will quotient by the
following equivalence relation,\footnote{
  We could take the alternative perspective that \Har{} forms a weak 2-category
  with permutation matrices as 2-cells, but we will take the equivalence relation
  perspective to simplify our presentation.
} which equates \Har{}s having isomorphic graphs.

\begin{definition}[Permutation Equivalence]
  \label{definition:permutation-equivalence}
  $f, g : \Har_{\ObA,\ObB}$ are \deftext{equivalent up to permutation $P$},
  denoted $f \permeq{P} g$,
  when $P$ is a permutation matrix such that the following conditions hold:
  \[
    g_M = P^T \cp f_M \cp P \qquad\qquad g_L = P^T \cp f_L \qquad\qquad g_R = P^T \cp f_R \qquad\qquad g_N = f_N \cp P
  \]
\end{definition}

\begin{remark}
  Note that this definition ensures that if $f \permeq{P} g$
  then $g_M$ is a graph isomorphic to $f_M$
  and also that the interfaces of $f$ and $g$ are the same.
\end{remark}

\begin{proposition}[Permutation Equivalence is an Equivalence Relation]
  Fix some $f, g\in \Har_{\ObA, \ObB}$.
  Then there exists an equivalence relation denoted $\sim$
  such that $f \sim g$ iff there exists
  some permutation matrix $P \in \Mat_\Bool(K, K)$ such that
  $f \permeq{P} g$ (cf. Definition \ref{definition:permutation-equivalence})
\end{proposition}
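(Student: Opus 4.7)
The plan is simply to verify the three axioms of an equivalence relation (reflexivity, symmetry, transitivity), using three elementary closure properties of permutation matrices: the identity $\id_K$ is a permutation matrix, the transpose $P^T$ of a permutation matrix is a permutation matrix with $P^T = P^{-1}$, and the product $PQ$ of two permutation matrices is a permutation matrix.

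For reflexivity, I would take $P = \id_K$ as the witness. Each of the four defining equations of Definition~\ref{definition:permutation-equivalence} reduces to $f_M = f_M$, $f_L = f_L$, $f_R = f_R$, $f_N = f_N$, so $f \permeq{\id_K} f$.

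For symmetry, assume $f \permeq{P} g$; I claim $g \permeq{P^T} f$. Multiplying $g_M = P^T \cp f_M \cp P$ on the left by $P$ and on the right by $P^T$ and using $P \cp P^T = \id_K$ yields $f_M = P \cp g_M \cp P^T = (P^T)^T \cp g_M \cp P^T$. The same trick applied to $g_L = P^T \cp f_L$ and $g_R = P^T \cp f_R$ gives $f_L = (P^T)^T \cp g_L$ and $f_R = (P^T)^T \cp g_R$, and from $g_N = f_N \cp P$ we get $f_N = g_N \cp P^T$.

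For transitivity, assume $f \permeq{P} g$ and $g \permeq{Q} h$; I claim $f \permeq{PQ} h$, with $PQ$ a permutation matrix by closure. Substituting gives $h_M = Q^T \cp g_M \cp Q = Q^T \cp P^T \cp f_M \cp P \cp Q = (PQ)^T \cp f_M \cp (PQ)$, and similarly $h_L = (PQ)^T \cp f_L$, $h_R = (PQ)^T \cp f_R$, $h_N = f_N \cp (PQ)$.

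There is no real obstacle: the verification is entirely formal and relies only on the group structure of permutation matrices under composition. The only mild care needed is to check that $P^T$ and $PQ$ are themselves permutation matrices, so that they are legitimate witnesses of the relation $\sim$.
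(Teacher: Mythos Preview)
Your proof is correct and follows exactly the same approach as the paper: reflexivity via $P = \id$, symmetry via $P^T$, and transitivity via $P \cp Q$. You have simply spelled out the verifications that the paper leaves implicit.
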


\begin{proof}
  Clearly $\permeq{}$ is reflexive because $f \permeq{\id} f$.
  Further, it is symmetric because if $f \permeq{P} g$, then $g \permeq{P^T} f$.
  Finally, transitivity follows from matrix composition:
  If $f \permeq{P} g$ and $g \permeq{Q} h$, then $f \permeq{P \cp Q} h$.
\end{proof}

This definition means that each $f : \Har_{\ObA, \ObB}$ can be put into an equivalent
left (resp.\ right) boundary order by permuting by $f_L$ (resp.\ $f_R$).
We will make heavy use of these particular permutations in defining composition
and tensor product, so we define them explicitly.

\begin{definition}
  \label{definition:left-boundary-order}
  The \deftext{left boundary order} of $f \in \Har_{\ObA, \ObB}$ is denoted $\lbo{f}$ and has the
  following data:

  \[
    \lbo{f}_M = f_L^T \cp f_M \cp f_L \qquad
    \lbo{f}_L = \id_{f_K} \qquad
    \lbo{f}_R = f_L^T \cp f_R \qquad
    \lbo{f}_N = f_N \cp f_L
  \]
\end{definition}

\begin{definition}
  \label{definition:right-boundary-order}
  The \deftext{right boundary order} of $f \in \Har_{\ObA, \ObB}$ is denoted $\rbo{f}$ and has the
  following data:
  \[
    \rbo{f}_M = f_R^T \cp f_M \cp f_R \qquad
    \rbo{f}_L = f_R^T \cp f_L \qquad
    \rbo{f}_R = \id_{f_K} \qquad
    \rbo{f}_N = f_N \cp f_R
  \]
\end{definition}

\begin{remark}
  Note that by definition $L(f) \permeq{f_L} f$ and vice-versa, $R(f) \permeq{f_R} f$.
\end{remark}

\section{Operations on \Har{}s}
\label{section:operations}
The main motivation for introducing $\Har$s is providing an efficient implementation for composing string diagrams. To this aim, in this section we define the operations for constructing and combining $\Har$s. These developments will also allow us to prove that $\Har$s form a category, in the next section.



\smallskip
\begin{definition}~
  \label{definition:operations}
\begin{itemize}
	\item The identity $\Har$ of type $\ObA \to \ObA$ is $(\Mzero_{\ObA, \ObA}, \id_\ObA, \id_\ObA, \Mzero_{\ObA, 1})$.
	\item   The symmetry $\twist_{\ObA,\ObB} : \ObA \otimes \ObB \to \ObB \otimes \ObA$
  is $(\Mzero_{\ObA+\ObB,\ObA+\ObB}, \id_{\ObA+\ObB}, P, \Mzero_{\ObA+\ObB,1})$
  with $P$ the block matrix
  $
   {\scriptsize \left|\begin{matrix}
      \Mzero    & \id_\ObB \\
      \id_\ObA  & \Mzero
    \end{matrix}\right|}
  $.
	\item Given an operation $g \colon \ObA \to \ObB \in \Sigma$,
  the `singleton'\footnote{
    The name `singleton' refers to the fact that such a \Har{} contains a single generator.
    We choose this name based on its common usage in Haskell libraries for a
    function creating a datastructure (e.g. a set) with a single element.
  }\Har{} is given by
  $(M, \id_K, \id_K, N)$,
  with size
  $K = \ObA + \ObB + 1$,
  node labels
  $N = (\Mzero_{1, \ObA}, g, \Mzero_{1, \ObB})$
  and $M$ the block matrix
  $
    {\scriptsize\left|\begin{matrix}
      0 & 0 & 0 \\
      S & 0 & 0 \\
      0 & T & 0
    \end{matrix}\right|}
  $,
  where $S \in \Mat_\Nat(1, \ObA)$ is the row vector
  $(1, 2, \ldots, \ObA)$
  and $T \in \Mat_\Nat(\ObB, 1)$ the column vector
  $(1, 2, \ldots, \ObB)$.
  \item Let $f : \ObA_1 \to \ObB_1$ and $g : \ObA_2 \to \ObB_2$ be \Har{}s.
The tensor product $f \otimes g$ is component-wise as follows.

\[
  (f \otimes g)_M = \scalebox{0.75}{\tikzfig{tensor-M}} \qquad
  (f \otimes g)_L = \scalebox{0.75}{\tikzfig{tensor-L}} \qquad
  (f \otimes g)_R = \scalebox{0.75}{\tikzfig{tensor-R}} \qquad
\]
with $(f \otimes g)_N$ given by appending $f_N$ and $g_N$, i.e., the block vector $( f_N \quad g_N )$.
\item Let $f : \ObA \to \ObB$ and $g : \ObB \to \ObC$ be \Har{}s.
Composition is defined component-wise as follows:

\[
  (f \cp g)_M = \scalebox{0.65}{\tikzfig{compose-M}} \qquad
  (f \cp g)_L = \scalebox{0.65}{\tikzfig{compose-L}} \qquad
  (f \cp g)_R = \scalebox{0.65}{\tikzfig{compose-R}} \qquad
\]

with $(f \cp g)_N$ given by appending $\rbo{f}_N$ and $\lbo{g}_N(b:)$, where $x(b:)$ denotes all
but the first $b$ elements of the array $x$.
Alternatively, one may regard $(f \cp g)_N$ as a diagonal matrix, and define
composition as for $(f \cp g)_M$.
\end{itemize}
\end{definition}

\begin{remark}
  Note that in the definition of $(f \cp g)_M$, the morphisms
  $\scalebox{0.5}{\tikzfig{projection}}$ and
  $\scalebox{0.5}{\tikzfig{injection}}$ represent the
  projection $\pi_2 : A \times B \to B$ and embedding $\iota_1 : A \to A \times B$ morphisms, respectively.
  One may think of the composition $M \cp \pi_1$ as selecting the last columns
  of $M$ and $\iota_0 \cp M$ as selecting the first rows.
\end{remark}

\section{Adequacy of the \Har{}-Implementation}
\label{section:category}
In this section we show how the interpretation of string diagrams as $\Har$s can be described as a \emph{full and faithful} functor between PROPs, meaning that our implementation is actually a 1-to-1 correspondence. As a preliminary step, we need to show how $\Har{}$s form a category.
\begin{proposition} 
  There is a PROP $\Har{}_\Sigma$ whose morphisms $\ObA \to \ObB$ are
  equivalence classes of values $\Har_{\ObA, \ObB}$ under the equivalence relation $\permeq{}$, and identity, symmetries, composition and tensor product are as defined in Section~\ref{section:operations}.
\end{proposition}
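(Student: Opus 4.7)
The plan is to verify the axioms of a (strict symmetric monoidal) PROP in order: well-definedness of the operations on $\permeq{}$-equivalence classes, the categorical laws, and finally the symmetric monoidal laws. Since objects are natural numbers and composition/tensor types are already compatible with natural-number addition, the PROP axioms reduce to checking categorical and monoidal laws on morphisms.

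First I would establish that each operation of Definition~\ref{definition:operations} descends to the quotient. For identity and symmetry there is nothing to check, since these produce specific representatives. For tensor product, if $f \permeq{P} f'$ and $g \permeq{Q} g'$, the witnessing permutation for $f \otimes g \permeq{} f' \otimes g'$ is the block-diagonal permutation $P \oplus Q$, and the four equalities of Definition~\ref{definition:permutation-equivalence} reduce to the block structure of the tensor formulas. For composition, the key observation is that $\lbo{\cdot}$ and $\rbo{\cdot}$ are natural under $\permeq{}$: if $g \permeq{Q} g'$ then $\lbo{g}$ and $\lbo{g'}$ differ by a permutation that fixes the first $\ObB$ nodes (and similarly for $\rbo{\cdot}$ and the last nodes). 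This fixed-boundary property is exactly what is needed to splice two permutations into one witnessing $f\cp g \permeq{} f' \cp g'$.

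Next I would verify the category laws. The unit laws $\id_\ObA \cp f \permeq{} f$ and $f \cp \id_\ObB \permeq{} f$ follow by unfolding composition: the identity \Har{} has zero adjacency matrix, so the block-matrix formulas for $(\id \cp f)_M$, $(\id \cp f)_L$, $(\id \cp f)_R$ reduce to those of a representative of $f$ modulo a permutation that swaps the prefix interface nodes with the boundary nodes of $f$. Associativity is the main obstacle. Both $((f \cp g) \cp h)$ and $(f \cp (g \cp h))$ glue the three graphs along two shared interfaces, but the intermediate boundary-orderings $\lbo{\cdot}$ and $\rbo{\cdot}$ arrange the interior nodes differently. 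The plan is to compute each side as an explicit three-block expression whose diagonal blocks are (representatives of) $f_M$, $g_M$, $h_M$ with off-diagonal blocks encoding the two gluings, and then exhibit a block permutation $P$ that reorders interior nodes of one side to match the other. Checking that $P$ simultaneously satisfies all four conditions of Definition~\ref{definition:permutation-equivalence} is the bookkeeping heart of the proof.

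Finally I would handle the symmetric monoidal structure. Strict associativity and unitality of $\otimes$ on the nose (up to reassociation of natural-number addition) follow immediately from the block matrix formulas, as the $\Mzero$ blocks compose cleanly and $\mathbf{0}$-sized boundary data is trivial. Bifunctoriality, i.e.\ the interchange law $(f_1 \cp g_1) \otimes (f_2 \cp g_2) \permeq{} (f_1 \otimes f_2) \cp (g_1 \otimes g_2)$, again reduces to producing an explicit block permutation that interleaves the two tensor summands to match the order imposed by composition on the shared interfaces. Naturality and the coherence axioms for $\twist_{\ObA,\ObB}$ are the cleanest case, since $\twist$ has zero adjacency matrix and only rearranges boundary data: composing with it therefore just permutes interface nodes, and both the hexagon and the involution $\twist_{\ObA,\ObB} \cp \twist_{\ObB,\ObA} \permeq{} \id$ follow from the fact that the corresponding block permutations multiply to the identity. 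Throughout, the recurring pattern is the same as in associativity: read off the block form produced by the definitions on each side, then write down the unique block permutation witnessing equivalence.
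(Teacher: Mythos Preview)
Your proposal is correct and follows essentially the same approach as the paper: reduce each axiom to exhibiting a block permutation witnessing $\permeq{}$-equivalence between the two sides, computed from the block-matrix formulas of Definition~\ref{definition:operations}. The paper's own proof is much terser---it treats only associativity in detail (in Appendix~\ref{section:associativity-of-composition}, via the lemma $\lbo{f}_R = \rbo{f}_L^T$ and the explicit witness $\id_{f_K-\ObB}\oplus \rbo{g}_L^T \oplus \id_{h_K-\ObC}$), and merely asserts the unit law, the symmetry involution $\twist\cp\twist\permeq{\twist}\id$, and tensor associativity; well-definedness on equivalence classes and the interchange law are not addressed explicitly, so your plan is if anything more complete.
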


\begin{proof}
  We give a graphical proof that composition is assocative up to permutation in
  the full version of our paper \cite[Appendix A.6]{coc}.

  It is straightforward to check that $f \cp \id = f$, and similarly one
  can check that $\twist \cp \twist \permeq{\twist} \id$.
  Finally, one can see that the tensor product is associative essentially
  because the direct sum is.
\end{proof}

\begin{definition} Let $\ToHar{\cdot} : \Syn{\Sigma} \to \Har_\Sigma$ be the identity-on-objects symmetric monoidal functor freely obtained by the mapping of operations $g \in \Sigma$ to singleton $\Har$s, as defined in Section~\ref{section:operations}. 
\end{definition}

\begin{proposition}\label{prop:iso} $\ToHar{\cdot} : \Syn{\Sigma} \to \Har_\Sigma$ is an isomorphism of PROPs.
\end{proposition}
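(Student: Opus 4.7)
The plan is to exploit the known isomorphism $\Syn{\Sigma} \cong \CspHypMI$ from \cite{Bonchi_2016} and factor $\ToHar{\cdot}$ through it, so that it suffices to exhibit a mutually inverse pair of identity-on-objects symmetric monoidal functors between $\CspHypMI$ and $\Har_\Sigma$. Since $\Syn{\Sigma}$ is freely generated, $\ToHar{\cdot}$ is automatically a PROP morphism; the work is entirely in producing and verifying an inverse.

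First, I would define a decoding $\FromHar{\cdot} : \Har_\Sigma \to \CspHypMI$. Given a representative $(M, L, R, N)$ of size $K$, split the vertex set $\{1,\dots,K\}$ according to $N$: the $\hypernode$-labelled indices form the node set $V$ of the hypergraph, while each $\hyperedge$-labelled index $v$ with label $(\hyperedge, g)$, where $g$ has arity $m$ and coarity $n$, contributes a single hyperedge labelled $g$. Its source list is the length-$m$ tuple whose $i$-th entry is the unique $\hypernode$-vertex $u$ with $M_{v,u}=i$, and dually for the target list using the outgoing column; the HAR conditions guarantee these tuples are well-defined and of the correct length. The cospan legs $\ObA \to G \leftarrow \ObB$ are read off from $L$ and $R$: the first $\ObA$ columns of $L$ select, in order, the left interface $\hypernode$-vertices, and symmetrically the last $\ObB$ columns of $R$ select the right interface. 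The remaining HAR conditions (acyclicity of $M$; every non-interface $\hypernode$-vertex having in- and out-degree $1$; the arity/coarity incidence rules) translate directly into the ma-conditions of $\CspHypMI$.

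Second, I would check that $\FromHar{\cdot}$ is well-defined on $\permeq{}$-classes and a PROP morphism. Permutation equivalence $f \permeq{P} g$ amounts to relabelling vertices by $P$ while simultaneously updating $M$, $N$, $L$, and $R$; this is precisely a hypergraph isomorphism commuting with the cospan legs, so the two decodings represent the same morphism of $\CspHypMI$ (recall the footnote in Definition~\ref{definition:directed-multi-hypergraph}). For the tensor product, the block-diagonal shapes of $(f\otimes g)_M$, $(f\otimes g)_L$, $(f\otimes g)_R$ and the concatenation of $N$ realise the disjoint union of hypergraphs with the obviously-coproduced cospans, matching the tensor of $\CspHypMI$ on the nose. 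For composition, one must verify that the matrix construction in Definition~\ref{definition:operations} computes the pushout identifying the right interface of $f$ with the left interface of $g$: the role of $\rbo{f}$ and $\lbo{g}$ is to bring the two interfaces into aligned positions, after which the block description fuses each matched pair of $\hypernode$-vertices into a single node and glues the incoming-edge data of $\rbo{f}_M$ with the outgoing-edge data of $\lbo{g}_M$. Checking that the resulting adjacency and labelling data agree with the pushout in $\Hyp_\Sigma$ is the core calculation.

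Third, I would establish both round-trips. For $\FromHar{\ToHar{\cdot}} = \id$, both sides are identity-on-objects symmetric monoidal functors that agree on generators: the singleton $\Har{}$ for $g \colon \ObA \to \ObB$ decodes to the ma-hypergraph with one $g$-labelled hyperedge and trivial interfaces, which is exactly the image of $g$ under $\Syn{\Sigma} \cong \CspHypMI$; freeness of $\Syn{\Sigma}$ then forces equality on all morphisms. For $\ToHar{\FromHar{\cdot}} = \id$, I would show that every ma-hypergraph with interfaces is $\permeq{}$-equivalent to the $\Har{}$ built by any enumeration of its vertices with the evident $M$, $L$, $R$, $N$; the inductive decomposition of ma-hypergraphs into generators and symmetries from \cite{Bonchi_2016} provides this via the previous paragraph's preservation of $\cp$ and $\otimes$.

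The main obstacle is the composition clause: showing that the block construction for $(f \cp g)_M$ realises the pushout in $\Hyp_\Sigma$ requires carefully tracking how the interface $\hypernode$-vertices are identified, how their incident integer-labelled edges are merged, and how monogamy is preserved in the result. The rest of the argument is essentially bookkeeping, facilitated by the fact that both $\Syn{\Sigma}$ and $\Har_\Sigma$ are generated by the same signature and shared PROP structure.
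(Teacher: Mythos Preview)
Your proposal is correct in outline and shares the paper's overall strategy: factor through the known isomorphism $\Syn{\Sigma} \cong \CspHypMI$, define a decoding functor $\FromHar{\cdot} : \Har_\Sigma \to \CspHypMI$ exactly as you describe, verify it is symmetric monoidal (with the composition/pushout correspondence being the substantive step), and check $\FromHar{\ToHar{g}} = g$ on generators. Where you diverge is in how the second round-trip is closed. The paper does \emph{not} argue directly that $\ToHar{\FromHar{h}} \permeq{} h$; instead it isolates a separate structural result (Proposition~\ref{proposition:har-generated}): every $h \in \Har_\Sigma$ decomposes as $p_1 \cp (\id \otimes \ToHar{g_1} \otimes \id) \cp p_2 \cp \ldots \cp p_N$, proved by using acyclicity of $h_M$ to peel off one generator at a time. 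This establishes that $\Har_\Sigma$ is itself freely generated by the singletons $\ToHar{g}$, after which an abstract ``change of basis'' lemma (Proposition~\ref{proposition:change-of-basis}) finishes the argument without ever computing $\ToHar{\FromHar{h}}$ explicitly. Your route instead leans on bijectivity of $\FromHar{\cdot}$ at the hom-set level (permutation equivalence $\Leftrightarrow$ hypergraph isomorphism for faithfulness; the explicit enumeration encoding for fullness), importing the decomposition result from \cite{Bonchi_2016} on the $\CspHypMI$ side rather than reproving an analogue inside $\Har_\Sigma$. Both are valid; the paper's version buys a reusable lemma and a clean internal characterisation of $\Har_\Sigma$, while yours avoids redoing a decomposition already available in the literature. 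One caution: your third paragraph is slightly type-confused (an ma-hypergraph cannot be $\permeq{}$-equivalent to a $\Har$), but the intended argument, namely faithfulness plus fullness of $\FromHar{\cdot}$, is sound once stated carefully.
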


We now give a sketch of our proof, leaving the full details to
the full version of our paper \cite[Appendix B]{coc}.

\begin{proof}
  Thanks to Proposition \cite[B.6]{coc},
  it suffices to show that:
  \begin{itemize}[label=-]
    \item There is a symmetric monoidal functor $\FromHar{\cdot} \colon 
       \Har_\Sigma \to \Syn{\Sigma}$,
    \item $\Har_\Sigma$ is generated by the singleton $\Har$s corresponding to
      the operations $g \in \Sigma$, and
    \item $\FromHar{\ToHar{g}} = g$ for $g \in \Sigma$.
  \end{itemize}
  Essentially, the idea is to show that $\Har_\Sigma$ is just a `relabeling of
  generators' of $\Syn{\Sigma}$.
\end{proof}

\section{Complexity}
\label{section:complexity}
We now give the time complexity of the composition and tensor product operations
defined in Section \ref{section:operations}.
We give empirical results to validate our claims in Section \ref{section:empirical}.

Naively, since our algorithm is expressed in terms of matrix multiplication,
it should have a time complexity of at best $O(n^{2.3728596})$ (at time of
writing~\cite{alman2020refined}).
However, we can do significantly better by exploiting the high degree of
\emph{sparsity} of the matrices of a \Har{}.

Concretely, observe that for a finite monoidal signature $\Sigma$ and $f \in \Har(\ObA, \ObB)$,
one can guarantee that the number of non-zero elements in $f_M$
is $O(f_K)$:

\begin{proposition}[Bounded sparsity]
  \label{proposition:bounded-sparsity}
  Fix a finite monoidal signature $\Sigma$ and let $f$ be a $\Har$.
  Now Let $m$ be the largest arity of any generator $g \in \Sigma$ and $n$ the
  largest \emph{coarity}.
  Then the rows of $f_M$ have at most $m$ non-zero elements,
  the columns at most $n$ non-zero elements,
  and $f_M$ has $O(f_K)$ non-zero elements.
\end{proposition}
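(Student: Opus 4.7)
The plan is to read the three claims directly off the structural constraints in Definition~\ref{definition:har}. The matrix $f_M$ is an adjacency matrix in which a column indexed by a node $v$ records the outgoing edges from $v$ and a row indexed by $v$ records the incoming edges. Thus bounding the non-zero entries of a row or column is the same as bounding the in- or out-degree of the corresponding vertex in the underlying hypergraph.

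First I would case split on the node label $f_N(v)$. If $v$ is labelled $\hypernode$, then Definition~\ref{definition:har} guarantees at most one incoming and at most one outgoing edge (with the possibility of zero edges when $v$ lies in an interface). If $v$ is labelled $(\hyperedge, g)$ for some $g \in \Sigma$ of arity $a_g$ and coarity $c_g$, then Definition~\ref{definition:har} forces exactly $a_g$ incoming edges and exactly $c_g$ outgoing edges, all carrying distinct non-zero labels in $\{1, \ldots, a_g\}$ and $\{1, \ldots, c_g\}$ respectively. Since $m = \max_{g \in \Sigma} a_g$ and $n = \max_{g \in \Sigma} c_g$, in both cases the number of non-zero entries in the row indexed by $v$ is bounded by $\max(1, m)$ and in its column by $\max(1, n)$, which (for the non-degenerate signatures of practical interest where $m, n \geq 1$) yields the stated bounds of $m$ and $n$ respectively.

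For the total, I would split the $K = f_K$ nodes into the sets $K_\bullet$ of $\hypernode$-nodes and $K_\circ$ of $\hyperedge$-nodes and compute the total number of non-zero entries by summing out-degrees across all columns:
\[
  \nnz(f_M) \;=\; \sum_{v \in K_\bullet} \deg^{\mathrm{out}}(v) + \sum_{v \in K_\circ} \deg^{\mathrm{out}}(v) \;\leq\; |K_\bullet| + n \cdot |K_\circ| \;\leq\; (n+1) \cdot f_K,
\]
which, since $n$ depends only on the fixed finite signature $\Sigma$ and not on $f$, is $O(f_K)$.

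There is no substantive obstacle here; the proposition is essentially a bookkeeping consequence of the monogamy condition on $\hypernode$-nodes together with the fixed in/out-degree of $\hyperedge$-nodes prescribed by their label. The only subtlety worth flagging is that the per-row and per-column bounds only genuinely depend on the maximum (co)arity when $m, n \geq 1$: for signatures containing only constants of arity or coarity zero one must replace $m$ (resp.\ $n$) by $\max(1, m)$ (resp.\ $\max(1, n)$) to accommodate non-interface $\hypernode$-nodes, but this does not affect the asymptotic $O(f_K)$ bound.
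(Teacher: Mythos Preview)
Your proposal is correct and follows essentially the same approach as the paper: both arguments bound the row and column non-zero counts by reading the in- and out-degree constraints directly from Definition~\ref{definition:har}. Your version is in fact more careful than the paper's brief proof, making the case split on node labels explicit and flagging the edge case where $m = 0$ or $n = 0$.
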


\begin{proof}
  By Definition \ref{definition:har}, each vertex $v$ in the graph represented
  by $f_M$ must have exactly $m$ incoming and $n$ outgoing edges.
  These edges correspond to the non-zero rows and columns of $f_M$, respectively,
  and so the non-zero elements of each row (resp.\ column) is at most $m$ (resp.\ $n$).
\end{proof}

Now, it happens that the time complexity of the `naive' sparse matrix
multiplication algorithm~\cite{GustavsonSparse} is essentially linear in the
number of \emph{non-trivial multiplications} required---that is, those scalar
multiplications where neither multiplicand is zero.
From this fact and the property of bounded sparsity, it follows that both
composition and tensor product of \Har{}s are linear-time operations.
To make this clear, we introduce the following proposition:

\begin{proposition}[Permutation of $\Har{}$ has linear complexity]
  \label{proposition:permutation-linear}
  Choose some $f \in \Har_\Sigma$ and a permutation matrix $P \in \Mat(f_K, f_K)$.
  Then $P \cp f_M$ and $f_M \cp P$ can be computed in linear time.
\end{proposition}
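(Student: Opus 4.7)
The plan is to exploit two structural facts: (i) a permutation matrix simply reorders rows or columns of the matrix it is composed with, and (ii) by Proposition \ref{proposition:bounded-sparsity}, the matrix $f_M$ has only $O(f_K)$ non-zero entries. Together these will yield linear complexity without invoking any general matrix multiplication algorithm.

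First I would record that, for any permutation matrix $P \in \Mat(f_K, f_K)$, the product $P \cp f_M$ is obtained from $f_M$ by reordering its rows according to $P$, while $f_M \cp P$ reorders its columns. This follows directly from the definition of matrix multiplication together with the fact that each row and each column of $P$ contains exactly one non-zero entry (equal to $1$). In particular, if $\pi \colon \{1, \dots, f_K\} \to \{1, \dots, f_K\}$ is the permutation encoded by $P$, then the non-zero entries of $P \cp f_M$ (resp. $f_M \cp P$) are in bijection with those of $f_M$, with row indices (resp. column indices) relabeled via $\pi$.

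Next I would give the linear-time algorithm explicitly in terms of a sparse representation. Build a lookup table for $\pi$ (and $\pi^{-1}$ if needed) in $O(f_K)$ time by reading the single non-zero entry in each column of $P$. Then iterate over the non-zero entries $(i,j,v)$ of $f_M$ and emit $(\pi(i), j, v)$ for $P \cp f_M$, or $(i, \pi^{-1}(j), v)$ for $f_M \cp P$. By Proposition \ref{proposition:bounded-sparsity}, there are $O(f_K)$ such entries, so the total work is $O(f_K)$.

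There is no real obstacle here: the argument is essentially bookkeeping once bounded sparsity and the row/column-permutation interpretation of $P$ are in hand. The only point that deserves care is to make explicit that we are working with a sparse representation of $f_M$ (for instance, a list of non-zero triples), so that the cost of constructing the output is proportional to the number of non-zero entries rather than to $f_K^2$; this is consistent with the sparse matrix model referenced in the surrounding discussion of \cite{GustavsonSparse}.
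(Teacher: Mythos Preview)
Your argument is correct. You exploit directly that composing with a permutation matrix is a row- or column-relabeling, and then use bounded sparsity (Proposition~\ref{proposition:bounded-sparsity}) to bound the work of iterating over the non-zero entries of $f_M$. This is sound and self-contained.

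The paper takes a slightly different route. Rather than describing a bespoke relabeling procedure, it invokes Gustavson's general sparse matrix multiplication complexity bound $O(2k + \nnz(A) + m)$, where $m$ counts non-trivial scalar multiplications, and then uses bounded sparsity to argue that $m$ is $O(f_K)$. It also notes, as an alternative, Gustavson's \Halfperm{} routine, which is essentially the algorithm you describe. The trade-off is this: your argument is more elementary and does not depend on an external complexity result, while the paper's version has the practical advantage of matching what an implementation built on an off-the-shelf sparse linear algebra library will actually execute --- which is the point of the surrounding discussion about leveraging existing optimised code. Both arrive at the same $O(f_K)$ bound.
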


\begin{proof}
  For matrices $A, B \in \Har(k, k)$, the complexity of Gustavson's sparse
  matrix multiplication routine~\cite{GustavsonSparse} is $O(2 k + \nnz(A) + m)$.
  Here $\nnz(A)$ is the number of non-zero entries of $A$
  and $m$ is the number of non-trivial multiplications required.

  By the bounded sparsity property
  (Proposition \ref{proposition:bounded-sparsity}),
  one can see that computing a row of the matrix $f_M \cp P$ requires only a
  \emph{constant} number of non-trivial multiplications, and further $\nnz(f_M)$ is
  $O(f_K)$.
  Thus, computing $f_M \cp P$ is $O(f_K)$.

  Alternatively, one may also see that linear complexity is possible using
  Gustavson's \Halfperm{} algorithm~\cite{GustavsonSparse},
  which can compute $P \cp f_M \cp Q^T$ in $O(\nnz(f_M))$ operations.
  Since $\nnz(f_M)$ is $O(f_K)$, this operation has linear complexity.
\end{proof}

Using Proposition \ref{proposition:permutation-linear} we can now show that
composition and tensor product have linear time complexity.

\begin{proposition}[Tensor Product of \Har{}s $f \otimes g$ is $O(f_K + g_K)$]
  \label{proposition:tensor-product-linear}
  Given $f \in \Har(\ObA_1, \ObB_1)$ and $g \in \Har(\ObA_2, \ObB_2)$,
  computation of $f \otimes g$ is $O(f_K + g_K)$.
\end{proposition}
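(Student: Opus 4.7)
The plan is to argue componentwise, showing that each of the four pieces $(f \otimes g)_M$, $(f \otimes g)_L$, $(f \otimes g)_R$ and $(f \otimes g)_N$ can be assembled in time $O(f_K + g_K)$, and then conclude by observing that the total size of $f \otimes g$ is $f_K + g_K$, so a linear bound on the assembly cost is also a linear bound in the size of the output.

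First I would handle the node-label array: $(f \otimes g)_N$ is the concatenation of $f_N$ and $g_N$, which is trivially $O(f_K + g_K)$. Next I would handle the adjacency matrix $(f \otimes g)_M$, which is (up to a fixed permutation re-interleaving the interface nodes of $f$ and $g$) the direct sum $f_M \oplus g_M$. Using Proposition \ref{proposition:bounded-sparsity}, we have $\nnz(f_M) = O(f_K)$ and $\nnz(g_M) = O(g_K)$, so the direct sum itself can be built in time $O(f_K + g_K)$ simply by writing the nonzero entries of $f_M$ and $g_M$ into disjoint blocks of the output sparse matrix. Any additional re-indexing introduced by the block-matrix definition of $(f \otimes g)_M$ amounts to left- and right-multiplication by fixed permutation matrices of size $f_K + g_K$, which by Proposition \ref{proposition:permutation-linear} is also linear.

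For the boundary matrices $(f \otimes g)_L$ and $(f \otimes g)_R$ the argument is essentially the same, but with an extra subtlety that is the only real obstacle in the proof. These are not pure direct sums of $f_L$ and $g_L$ (respectively $f_R$ and $g_R$): the interface nodes of $f$ and $g$ must be interleaved so that the first $\ObA_1$ boundary positions come from $f$, the next $\ObA_2$ from $g$, and analogously for the right boundary. However, $f_L, f_R, g_L, g_R$ are all permutation matrices, so each has exactly $f_K$ or $g_K$ nonzero entries, and the block-matrix expressions defining $(f \otimes g)_L$ and $(f \otimes g)_R$ are compositions of such sparse permutations with fixed interleaving permutations. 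By Proposition \ref{proposition:permutation-linear} each such product is computable in $O(f_K + g_K)$ time, for instance via Gustavson's \Halfperm{} routine.

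Finally I would combine the four bounds: the total work is $O(f_K + g_K)$ for $M$, $O(f_K + g_K)$ for $L$, the same for $R$, and $O(f_K + g_K)$ for $N$. Since $(f \otimes g)_K = f_K + g_K$, this gives the claimed linear complexity. The main subtlety to be careful about is the interleaving permutations inside the block-matrix definitions of $L$ and $R$; the key observation making this work is that these are themselves permutation matrices of size $f_K + g_K$ with $\nnz = f_K + g_K$, so composing them with the (already sparse) $f_L, g_L, f_R, g_R$ stays within the linear budget by Proposition \ref{proposition:permutation-linear}.
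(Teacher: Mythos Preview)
Your proposal is correct and follows essentially the same approach as the paper: a componentwise argument showing that each of $M$, $L$, $R$, $N$ is built from direct sums and permutation products of size $f_K + g_K$, each of which is linear by Proposition~\ref{proposition:permutation-linear} (and, for $M$, bounded sparsity). The paper's own proof is considerably terser---it simply asserts that every component is either a direct sum or a permutation product and that both are $O(f_K + g_K)$---so your version is a more explicit unpacking of the same idea, including the interleaving subtlety for $L$ and $R$ that the paper leaves implicit.
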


\begin{proof}
  It is clear from definition \ref{definition:har} that each component of $f
  \otimes g$ is computed either as a direct sum or a multiplication of
  permutation matrices of size $f_K + g_K$.
  Since each of these operations is $O(f_K + g_K)$, it is clear that the whole
  operation is as well.
\end{proof}

\begin{proposition}[Composition of \Har{}s $f \cp g$ is $O(f_K + g_K)$]
  Given $f \in \Har(\ObA, \ObB)$ and $g \in \Har(\ObB, \ObC)$,
  computation of $f \cp g$ is $O(f_K + g_K)$.
\end{proposition}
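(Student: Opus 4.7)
My plan is to mimic the structure of the proof of Proposition~\ref{proposition:tensor-product-linear} by decomposing composition into a constant number of sub-operations, each of which runs in time $O(f_K + g_K)$, and to invoke Propositions~\ref{proposition:bounded-sparsity} and~\ref{proposition:permutation-linear} at every step.

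First, I will compute the right boundary order $\rbo{f}$ and the left boundary order $\lbo{g}$, since Definition~\ref{definition:operations} expresses $(f \cp g)_M$, $(f \cp g)_L$, and $(f \cp g)_R$ entirely in terms of these two objects combined with direct sums and outer permutations. Building $\rbo{f}$ amounts to the conjugation $f_R^T \cp f_M \cp f_R$, the left-permutation $f_R^T \cp f_L$, and the label permutation $f_N \cp f_R$. By Proposition~\ref{proposition:permutation-linear} every matrix step is $O(f_K)$, and the vector permutation is trivially linear, so $\rbo{f}$ is obtained in $O(f_K)$; symmetrically $\lbo{g}$ is obtained in $O(g_K)$.

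Next, I will assemble $(f \cp g)_M$, $(f \cp g)_L$, and $(f \cp g)_R$ from $\rbo{f}$, $\lbo{g}$, and permutation matrices. Each of these expressions is a direct sum (i.e.\ a block-diagonal matrix) of sparse matrices, possibly multiplied on the outside by one further permutation. By Proposition~\ref{proposition:bounded-sparsity} we have $\nnz(\rbo{f}_M) = O(f_K)$ and $\nnz(\lbo{g}_M) = O(g_K)$, so the direct sum has size $(f_K + g_K) \times (f_K + g_K)$ and at most $O(f_K + g_K)$ non-zeros, and can be built in linear time by concatenating coordinate lists. Any outer permutation multiplication on the resulting sparse matrix is again linear by a second application of Proposition~\ref{proposition:permutation-linear} (equivalently, of Gustavson's \Halfperm{}). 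The node-label vector $(f \cp g)_N$ is an append followed by a suffix slice and is clearly linear.

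The one place where some care is needed, and which I expect to be the main obstacle, is checking that the block construction implementing the identification of $f$'s right boundary with $g$'s left boundary does not introduce more than $O(f_K + g_K)$ non-zero entries along the shared $\ObB$-indexed strip. This is where the bounded sparsity from Proposition~\ref{proposition:bounded-sparsity} is essential: each of the $\ObB$ interface nodes carries at most a constant number of incoming and outgoing edges, so the glued block contributes only $O(\ObB) = O(f_K + g_K)$ non-zeros, and sparsity is preserved throughout. Once this bookkeeping is verified, summing the constantly many $O(f_K + g_K)$ sub-operations yields the claimed bound.
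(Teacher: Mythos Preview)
Your proposal is correct and follows essentially the same approach as the paper: compute $\rbo{f}$ and $\lbo{g}$ in linear time via Proposition~\ref{proposition:permutation-linear}, then observe that the remaining work is of the same shape as in Proposition~\ref{proposition:tensor-product-linear} (direct sums and permutation products of bounded-sparsity matrices). The paper's own proof is a two-line sketch that points to exactly these two ingredients, whereas you spell out each component and additionally verify that the glued $\ObB$-strip cannot blow up the non-zero count; this extra bookkeeping is sound but not something the paper pauses to justify.
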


\begin{proof}
  The proof is similar to that of Proposition \ref{proposition:tensor-product-linear},
  except that we must include the cost of the operations $R(f)$ and $L(g)$.
  These operations are linear by proposition \ref{proposition:permutation-linear},
  and so the composition $f \cp g$ must also be $O(f_K + g_K)$.
\end{proof}




\section{Empirical Results}
\label{section:empirical}
We now give an empirical evaluation of our complexity claims on several
synthetic benchmarks.
We compare our own implementation to the wiring diagrams of
\texttt{Catlab.jl}~\cite{Catlab, patterson2020}.\footnote{
  Note however that Catlab's wiring diagrams provide a strictly more general
  setting than ours.
  We discuss possible generalisations of our approach to address this in Section
  \ref{section:discussion}.
}
Both our implementation and benchmarking code are available on GitHub at
\href{https://github.com/statusfailed/cartographer-har}{https://github.com/statusfailed/cartographer-har}.

In the following benchmarks we use a fixed monoidal signature based on the
finite presentation of boolean circuits described in \cite{lafont_towards_2003}.
We choose boolean circuits since they are a real-world application of string
diagrams in which string diagrams would typically be very large.
For example, a string-diagrammatic representation of a CPU would need (at least)
hundreds of thousands of generators.
In particular, our benchmarks use the following generators:
\[ \Sigma = \{ \gcopy : 1 \to 2 \qquad \gxor : 2 \to 1 \qquad \gand : 2 \to 1 \qquad \gnot : 1 \to 1 \} \]

\paragraph{Experiment Details}
Each benchmark has the same structure: for $k \in \{ 1 \ldots 20 \}$ we
construct two string diagrams consisting of $2^{k-1}$ generators, and then measure tensor
product or composition of those diagrams.
We repeat each measurement $10$ times for each $k$ and plot the mean with
minimum and maximum error bars.
Further, if a result takes longer than 60 seconds to compute, it is omitted.
More details of our experimental setup can be found in
the full version of our paper \cite[Appendix C]{coc}.

Note carefully that the performance chart for each benchmark uses a log scale on
both axes, since for each $k$ we construct a string diagram of size $2^k$.\footnote{
  In these experiments, by ``size'' we mean specifically the number of generators in the diagram.
}

\subsection{Benchmark \#1: Repeated Tensor}
\label{section:repeated-tensor}
We first measure the performance of the tensor product of large representations.
Concretely, let $f$ be the $k$-fold tensor product of $\gand$, i.e.,
$f = \gand \otimes \overset{k}\ldots \otimes \gand$.
We measure the performance of computing $f \otimes f$.

\resizebox{\textwidth}{!}{\input{plots/tensor.pgf}}

\subsection{Benchmark \#2: Small-Boundary Composition}
We measure the performance of composition
$\ObA \overset{f}{\to} \ObB \overset{g}{\to} \ObC$
along a small shared boundary, i.e., where $\ObB \ll f_K + g_K$.
Concretely, let $f$ be the $k$-fold composition of $\gnot$,
so that $f = \gnot \cp \overset{k}\ldots \cp \gnot$.
We measure the performance of computing $f \cp f$.

\resizebox{\textwidth}{!}{\input{plots/compose_small.pgf}}

\subsection{Benchmark \#3: Large-Boundary Composition}
We measure the performance of composition
$\ObA \overset{f}{\to} \ObB \overset{g}{\to} \ObC$
along a \emph{large} shared boundary, i.e.\ where
$\ObB \approx \mathsf{min}(f_K, g_K)$.
In particular, let $f$ be the $k$-fold tensor product of $\gnot$.
Then we measure $f \cp f$.

\resizebox{\textwidth}{!}{\input{plots/compose_large.pgf}}

\subsection{Benchmark \#4: Synthetic Benchmark}
We give a final benchmark as a validity check to ensure our implementation still
performs well on realistic-looking representations.
Specifically, we measure the performance of composing two $2^{k-1}$-bit adder
circuits to form a $2^k$-bit adder.

\scalebox{0.85}{\resizebox{\textwidth}{!}{\input{plots/synthetic.pgf}}}

\section{Discussion \& Future Work}
\label{section:discussion}
We consider our work a step towards a set of high-performance algorithms for
manipulating string diagrams, but naturally a number of avenues for improvement
remain.

Most obviously, it remains to explore algorithms for matching and rewriting, which are
necessary to support applications like a string-diagrammatic proof assistant.
Perhaps less obviously, we would also like to study algorithms for \emph{evaluating}
circuit diagrams: this can be useful for e.g., simulating a boolean circuit or writing an
interpreter for a programming language whose syntax is based on SMCs.

There are also several optimizations that could be made to our current algorithm.
Firstly, we represent permutations as matrices, but a more efficient approach
could be to use dense vectors of indices.
However, this would require the implementor to have access to a function like
the \Halfperm{} algorithm of \cite{GustavsonSparse}.

Finally, several generalisations may be possible.
Most useful would be to generalise to arbitrary symmetric monoidal syntax
rather than just PROPs.
Secondly, by modifying our representation slightly, we could
account for arbitrary hypergraphs with interfaces --- although we also believe this
would affect the complexity bounds.





\paragraph{Acknowledgements}
Fabio Zanasi acknowledges support from \textsc{epsrc} EP/V002376/1.

\nocite{*}
\bibliographystyle{eptcs}
\bibliography{main}

\begin{thebibliography}{10}
\providecommand{\bibitemdeclare}[2]{}
\providecommand{\surnamestart}{}
\providecommand{\surnameend}{}
\providecommand{\urlprefix}{Available at }
\providecommand{\url}[1]{\texttt{#1}}
\providecommand{\href}[2]{\texttt{#2}}
\providecommand{\urlalt}[2]{\href{#1}{#2}}
\providecommand{\doi}[1]{doi:\urlalt{http://dx.doi.org/#1}{#1}}
\providecommand{\bibinfo}[2]{#2}

\bibitemdeclare{misc}{alman2020refined}
\bibitem{alman2020refined}
\bibinfo{author}{Josh \surnamestart Alman\surnameend} \&
  \bibinfo{author}{Virginia~Vassilevska \surnamestart Williams\surnameend}
  (\bibinfo{year}{2020}): \emph{\bibinfo{title}{A Refined Laser Method and
  Faster Matrix Multiplication}}.

\bibitemdeclare{article}{Bonchi_2016}
\bibitem{Bonchi_2016}
\bibinfo{author}{Filippo \surnamestart Bonchi\surnameend},
  \bibinfo{author}{Fabio \surnamestart Gadducci\surnameend},
  \bibinfo{author}{Aleks \surnamestart Kissinger\surnameend},
  \bibinfo{author}{Paweł \surnamestart Sobociński\surnameend} \&
  \bibinfo{author}{Fabio \surnamestart Zanasi\surnameend}
  (\bibinfo{year}{2016}): \emph{\bibinfo{title}{Rewriting modulo symmetric
  monoidal structure}}.
\newblock {\sl \bibinfo{journal}{Proceedings of the 31st Annual ACM/IEEE
  Symposium on Logic in Computer Science}}.
\newblock \urlprefix\url{http://dx.doi.org/10.1145/2933575.2935316}.

\bibitemdeclare{book}{coecke_kissinger_2017}
\bibitem{coecke_kissinger_2017}
\bibinfo{author}{Bob \surnamestart Coecke\surnameend} \& \bibinfo{author}{Aleks
  \surnamestart Kissinger\surnameend} (\bibinfo{year}{2017}):
  \emph{\bibinfo{title}{Picturing Quantum Processes: A First Course in Quantum
  Theory and Diagrammatic Reasoning}}.
\newblock \bibinfo{publisher}{Cambridge University Press},
  \doi{10.1017/9781316219317}.

\bibitemdeclare{misc}{OpenGraphs}
\bibitem{OpenGraphs}
\bibinfo{author}{Lucas \surnamestart Dixon\surnameend} \&
  \bibinfo{author}{Aleks \surnamestart Kissinger\surnameend}
  (\bibinfo{year}{2010}): \emph{\bibinfo{title}{Open Graphs and Monoidal
  Theories}}.

\bibitemdeclare{misc}{genovese2021categorical}
\bibitem{genovese2021categorical}
\bibinfo{author}{Fabrizio \surnamestart Genovese\surnameend} \&
  \bibinfo{author}{Jelle \surnamestart Herold\surnameend}
  (\bibinfo{year}{2021}): \emph{\bibinfo{title}{A Categorical Semantics for
  Hierarchical Petri Nets}}.

\bibitemdeclare{misc}{genovese2021nets}
\bibitem{genovese2021nets}
\bibinfo{author}{Fabrizio~Romano \surnamestart Genovese\surnameend},
  \bibinfo{author}{Fosco \surnamestart Loregian\surnameend} \&
  \bibinfo{author}{Daniele \surnamestart Palombi\surnameend}
  (\bibinfo{year}{2021}): \emph{\bibinfo{title}{Nets with Mana: A Framework for
  Chemical Reaction Modelling}}.

\bibitemdeclare{article}{GustavsonSparse}
\bibitem{GustavsonSparse}
\bibinfo{author}{Fred~G. \surnamestart Gustavson\surnameend}
  (\bibinfo{year}{1978}): \emph{\bibinfo{title}{Two Fast Algorithms for Sparse
  Matrices: Multiplication and Permuted Transposition}}.
\newblock {\sl \bibinfo{journal}{ACM Trans. Math. Softw.}}
  \bibinfo{volume}{4}(\bibinfo{number}{3}), p. \bibinfo{pages}{250–269}.
\newblock \urlprefix\url{https://doi.org/10.1145/355791.355796}.

\bibitemdeclare{misc}{MESH}
\bibitem{MESH}
\bibinfo{author}{Benjamin \surnamestart Heintz\surnameend},
  \bibinfo{author}{Rankyung \surnamestart Hong\surnameend},
  \bibinfo{author}{Shivangi \surnamestart Singh\surnameend},
  \bibinfo{author}{Gaurav \surnamestart Khandelwal\surnameend},
  \bibinfo{author}{Corey \surnamestart Tesdahl\surnameend} \&
  \bibinfo{author}{Abhishek \surnamestart Chandra\surnameend}
  (\bibinfo{year}{2019}): \emph{\bibinfo{title}{MESH: A Flexible Distributed
  Hypergraph Processing System}}.

\bibitemdeclare{article}{Lack2004}
\bibitem{Lack2004}
\bibinfo{author}{Stephen \surnamestart Lack\surnameend} (\bibinfo{year}{2004}):
  \emph{\bibinfo{title}{Composing PROPs}}.
\newblock {\sl \bibinfo{journal}{Theory and Applications of Categories}}
  \bibinfo{volume}{13}(\bibinfo{number}{9}), pp. \bibinfo{pages}{147--163}.
\newblock \urlprefix\url{http://www.tac.mta.ca/tac/volumes/13/9/13-09abs.html}.

\bibitemdeclare{article}{lafont_towards_2003}
\bibitem{lafont_towards_2003}
\bibinfo{author}{Yves \surnamestart Lafont\surnameend} (\bibinfo{year}{2003}):
  \emph{\bibinfo{title}{Towards an algebraic theory of {Boolean} circuits}}.
\newblock {\sl \bibinfo{journal}{Journal of Pure and Applied Algebra}}
  \bibinfo{volume}{184}(\bibinfo{number}{2-3}), pp. \bibinfo{pages}{257--310},
  \doi{10.1016/S0022-4049(03)00069-0}.

\bibitemdeclare{article}{Catlab}
\bibitem{Catlab}
\bibinfo{author}{Evan \surnamestart Patterson\surnameend},
  \bibinfo{author}{Andrew \surnamestart Baas\surnameend},
  \bibinfo{author}{James \surnamestart Fairbanks\surnameend},
  \bibinfo{author}{Micah \surnamestart Halter\surnameend},
  \bibinfo{author}{Sophie \surnamestart Libkind\surnameend} \&
  \bibinfo{author}{Owen \surnamestart Lynch\surnameend} (\bibinfo{year}{2020}):
  \emph{\bibinfo{title}{Catlab.jl: {A} framework for applied category theory}}.
\newblock \doi{10.17605/OSF.IO/HMNFE}.
\newblock \urlprefix\url{https://osf.io/hmnfe/}.
\newblock \bibinfo{note}{Publisher: OSF}.

\bibitemdeclare{inproceedings}{patterson2020}
\bibitem{patterson2020}
\bibinfo{author}{Evan \surnamestart Patterson\surnameend},
  \bibinfo{author}{David~I. \surnamestart Spivak\surnameend} \&
  \bibinfo{author}{Dmitry \surnamestart Vagner\surnameend}
  (\bibinfo{year}{2020}): \emph{\bibinfo{title}{Wiring diagrams as normal forms
  for computing in symmetric monoidal categories}}.
\newblock In: {\sl \bibinfo{booktitle}{Proceedings of the 2020 Applied Category
  Theory Conference}}.
\newblock \urlprefix\url{http://dx.doi.org/10.4204/EPTCS.333.4}.

\bibitemdeclare{article}{selinger_survey_2010}
\bibitem{selinger_survey_2010}
\bibinfo{author}{Peter \surnamestart Selinger\surnameend}
  (\bibinfo{year}{2010}): \emph{\bibinfo{title}{A survey of graphical languages
  for monoidal categories}}.
\newblock {\sl
  \bibinfo{journal}{\href{https://arxiv.org/abs/0908.3347}{arXiv:0908.3347
  [math]}}} \bibinfo{volume}{813}, pp. \bibinfo{pages}{289--355},
  \doi{10.1007/978-3-642-12821-9-4}.

\bibitemdeclare{inproceedings}{practical_parallel_hypergraph}
\bibitem{practical_parallel_hypergraph}
\bibinfo{author}{Julian \surnamestart Shun\surnameend} (\bibinfo{year}{2020}):
  \emph{\bibinfo{title}{Practical Parallel Hypergraph Algorithms}}.
\newblock In: {\sl \bibinfo{booktitle}{Proceedings of the 25th ACM SIGPLAN
  Symposium on Principles and Practice of Parallel Programming}},
  \bibinfo{series}{PPoPP '20}, \bibinfo{publisher}{Association for Computing
  Machinery}, \bibinfo{address}{New York, NY, USA}, p.
  \bibinfo{pages}{232–249}.
\newblock \urlprefix\url{https://doi.org/10.1145/3332466.3374527}.

\bibitemdeclare{inproceedings}{Cartographer}
\bibitem{Cartographer}
\bibinfo{author}{Pawel \surnamestart Sobocinski\surnameend},
  \bibinfo{author}{Paul~W. \surnamestart Wilson\surnameend} \&
  \bibinfo{author}{Fabio \surnamestart Zanasi\surnameend}
  (\bibinfo{year}{2019}): \emph{\bibinfo{title}{{CARTOGRAPHER: A Tool for
  String Diagrammatic Reasoning (Tool Paper)}}}.
\newblock In \bibinfo{editor}{Markus \surnamestart Roggenbach\surnameend} \&
  \bibinfo{editor}{Ana \surnamestart Sokolova\surnameend}, editors: {\sl
  \bibinfo{booktitle}{8th Conference on Algebra and Coalgebra in Computer
  Science (CALCO 2019)}}, {\sl \bibinfo{series}{Leibniz International
  Proceedings in Informatics (LIPIcs)}} \bibinfo{volume}{139},
  \bibinfo{publisher}{Schloss Dagstuhl--Leibniz-Zentrum fuer Informatik},
  \bibinfo{address}{Dagstuhl, Germany}, pp. \bibinfo{pages}{20:1--20:7},
  \doi{10.4230/LIPIcs.CALCO.2019.20}.
\newblock \urlprefix\url{http://drops.dagstuhl.de/opus/volltexte/2019/11448}.

\bibitemdeclare{misc}{spivak2020poly}
\bibitem{spivak2020poly}
\bibinfo{author}{David~I. \surnamestart Spivak\surnameend}
  (\bibinfo{year}{2020}): \emph{\bibinfo{title}{Poly: An abundant categorical
  setting for mode-dependent dynamics}}.

\bibitemdeclare{misc}{coc}
\bibitem{coc}
\bibinfo{author}{Paul \surnamestart Wilson\surnameend} \&
  \bibinfo{author}{Fabio \surnamestart Zanasi\surnameend}
  (\bibinfo{year}{2021}): \emph{\bibinfo{title}{The Cost of Compositionality: A
  High-Performance Implementation of String Diagram Composition}}.
\newblock \urlprefix\url{https://arxiv.org/abs/2105.09257}.

\bibitemdeclare{article}{FastSparseMatrixMultiplication}
\bibitem{FastSparseMatrixMultiplication}
\bibinfo{author}{Raphael \surnamestart Yuster\surnameend} \&
  \bibinfo{author}{Uri \surnamestart Zwick\surnameend} (\bibinfo{year}{2005}):
  \emph{\bibinfo{title}{Fast Sparse Matrix Multiplication}}.
\newblock {\sl \bibinfo{journal}{ACM Trans. Algorithms}}
  \bibinfo{volume}{1}(\bibinfo{number}{1}), p. \bibinfo{pages}{2–13}.
\newblock \urlprefix\url{https://doi.org/10.1145/1077464.1077466}.

\end{thebibliography}





\end{document}